\newcommand{\bmat}[1]{\left[\begin{array}{#1}}
\newcommand{\bbmat}[0]{\end{array}\right]}
\newcommand{\pmat}[0]{\begin{pmatrix}}
\newcommand{\ppmat}[0]{\end{pmatrix}}
\newcommand{\R}[0]{\mathbb{R}}
\newcommand{\prv}[1]{\mathcal{P}_{#1}}
\newcommand{\vp}{\varepsilon}
\newtheorem{thm}{Theorem}
\newtheorem{cor}[thm]{Corollary}
\newtheorem{lem}[thm]{Lemma}
\theoremstyle{definition}
\newtheorem{defin}[thm]{Definition}
\newtheorem{Example}[thm]{Example}
\newtheorem{Question}[thm]{Question}
    \title{Moving Parseval frames for vector bundles}
    \date{\today}
    \author{D. Freeman, D. Poore, A. R. Wei, M. Wyse}
\address{Department of Mathematics, University of Texas at Austin, 1 University Station
C1200, Austin TX 78712-0257} \email{freeman@math.utexas.edu}
\address{Department of Mathematics, Pomona College, Claremont, CA 92711}
\email{dep02007@mymail.pomona.edu}
\address{Department of Mathematics, Case Western Reserve University, Cleveland, OH 44106}
\email{arw27@case.edu}
\address{Department of Mathematics, Pomona College, Claremont, CA
91711} \email{mkw02007@mymail.pomona.edu}
\thanks{
The research of all four authors was supported by the "Wavelets and
Matrix Analysis" REU at Texas A\&M which was supported by the
National Science Foundation.  The first author was supported by a
grant from the National Science Foundation.}
\thanks{2010 \textit{Mathematics Subject Classification}: 42C15, 57R22}
\begin{document}

\begin{abstract}

Parseval frames can be thought of as redundant or linearly dependent
coordinate systems for Hilbert spaces, and have important
applications in such areas as signal processing, data compression,
and sampling theory. We extend the notion of a Parseval frame for a
fixed Hilbert space to that of a moving Parseval frame for a vector
bundle over a manifold. Many vector bundles do not have a moving
basis, but in contrast to this every vector bundle over a
paracompact manifold has a moving Parseval frame. We prove that a
sequence of sections of a vector bundle is a moving Parseval frame
if and only if the sections are the orthogonal projection of a
moving orthonormal basis for a larger vector bundle. In the case
that our vector bundle is the tangent bundle of a Riemannian
manifold, we prove that a sequence of vector fields is a Parseval
frame for the tangent bundle of a Riemannian manifold if and only if
the vector fields are the orthogonal projection of a moving
orthonormal basis for the tangent bundle of a larger Riemannian
manifold.

%Continuous moving bases on manifolds play important roles in such
%areas as differential geometry and mathematical physics. However,
%not all manifolds have a continuous moving basis for their tangent
%space. Some important examples of non-parallelizable manifolds are
%the 2-sphere, Mobius strip, Klein bottle, and projective plane. We
%show that all of these examples have natural continuous moving
%Parseval frames of 3 vectors. A Parseval frame can be thought of as
%a spanning set that can decompose and reconstruct vectors in a
%similar manner to a basis. Hence, continuous moving Parseval frames
%can offer a basis-like structure for non-parallelizable manifolds.
%We also examine continuous dilations of continuous moving Parseval
%frames. It is an important result in frame theory that every
%Parseval frame can be dilated to an orthonormal basis. However, due
%to topological obstructions, a full generalization of this result is
%impossible for continuous moving Parseval frames. We prove instead
%that every continuous moving Parseval frame can be continuously
%dilated to an orthonormal set. Using this result, we show that every
%manifold with a continuous moving Parseval frame can be embedded in
%a parallelizable manifold such that the Parseval frame is the
%projection of a basis for the larger manifold.

\end{abstract}\maketitle

\section{Introduction}\label{S:1}

Frames for Hilbert spaces are essentially redundant coordinate
systems.  That is, every vector can be represented as a series of
scaled frame vectors, but the series is not unique. Though this
redundancy is not necessary in a coordinate system, it can actually
be very useful. In particular, frames have played important roles in
modern signal processing after originally being applied in 1986 by
Daubechies, Grossmann, and Meyer \cite{DGM}. Besides being important
for their real world applications, frames are also interesting for
both their analytic and geometric properties
\cite{BF},\cite{BCPS}\cite{DFKLOW},\cite{HL} as well as their
connection to the famous Kadison-Singer problem
\cite{CCLV},\cite{W}.

A sequence of vectors $(x_i)$ in a Hilbert space $H$ is called a
{\em frame} for $H$ if there exists constants $A,B>0$ such that
$$A\|x\|^2\leq\sum |\langle x_i,x\rangle|^2\leq B\|x\|^2\qquad\textrm{ for
all } x\in H.$$ The constants $A,B$ are called the {\em frame
bounds}. The frame is called {\em tight} if $A=B$, and is called
{\em Parseval} if $A=B=1$.  The name Parseval was chosen because
$A=B=1$ if and only if the frame satisfies Parseval's identity. That
is, a sequence of vectors $(x_i)$ in a Hilbert space $H$ is a
Parseval frame for $H$ if and only if
$$\sum \langle x_i,x\rangle x_i=x \textrm{ for all }x\in H.$$  This useful reconstruction formula
 follows from the dilation theorem of Han and Larson \cite{HL}.
They proved that if $(x_i)$ is a Parseval frame for a Hilbert space
$H$, then $(x_i)$ is the orthogonal projection of an orthonormal
basis for a larger Hilbert space which contains $H$ as a subspace.
It is easy to see that the orthogonal projection of an orthonormal
basis is a Parseval frame, and thus the dilation theorem
characterizes Parseval frames as orthogonal projections of
orthonormal bases.

In differential topology and differential geometry, the word frame
has a different meaning. A moving frame for the tangent bundle of a
smooth manifold is essentially a basis for the tangent space at each
point in the manifold which varies smoothly over the manifold.  In
other words, a moving frame for the tangent bundle of an
$n$-dimensional smooth manifold is a set of $n$ linearly independent
vector fields. These two different definitions for the word "frame",
naturally lead one to question how they are related.  We will
combine the concepts by studying Parseval frames which vary smoothly
over a manifold, which we formally define below.
\begin{defin}\label{frame}
Let $\pi:E\rightarrow M$ be a rank n-vector bundle over a smooth
manifold $M$ with a given inner product $\langle\cdot,\cdot\rangle$.
Let $k\geq n$, and $f_i:M\rightarrow E$ be a smooth section of $\pi$
for all $1\leq i\leq k$. We say that $(f_i)_{i=1}^k$ is a {\em
moving Parseval frame} for $\pi$ if $(f_i(x))_{i=1}^k$ is a Parseval
frame for the fiber $\pi^{-1}(x)$ for all $x\in M$.  That is, for
all $x\in M$,
$$y=\sum_{i=1}^k \langle y,f_i(x)\rangle f_i(x)\qquad\textrm{ for all
}y\in\pi^{-1}(x).
$$
\end{defin}
A Parseval frame for a fixed Hilbert space can be constructed by
projecting an orthonormal basis, and thus the natural way to
construct moving Parseval frames is to project moving orthonormal
bases.  For instance, the two-dimensional sphere $S^2$ does not have
a nowhere-zero vector field, and hence cannot have a moving
orthonormal basis for its tangent space.  However, if we consider
$S^2$ as the unit sphere in $\R^3$ and $(e_i)_{i=1}^3$ as the
standard unit vector basis for $\R^3$, then at each point $p\in S^2$
we may project $(e_i)_{i=1}^3$ onto the tangent space $T_p(S^2)$,
giving us a moving Parseval frame of three vectors for $T S^2$.  As
every vector bundle over a para-compact manifold is a subbundle of a
trivial bundle, we may project the basis for the trivial bundle onto
the sub-bundle and obtain that every vector bundle over a
para-compact manifold has a moving Parseval frame. Thus in contrast
to moving bases, we have that moving Parseval frames always exist.
The natural general questions to consider are then: When do moving
Parseval frames with particular structure exist? How do theorems
about Parseval frames generalize to the vector bundle setting?, and
How can we construct nice moving Parseval frames for vector bundles
in the absence of moving bases? Our main results are the following
theorems which extend the dilation theorem of Han and Larson to the
context of vector bundles.  The proofs will be given in Section
\ref{S:3}.

\begin{thm}\label{VB}
Let $\pi_1:E_1\rightarrow M$ be a rank $n$ vector bundle over a
paracompact manifold $M$ with a moving Parseval frame
$(f_i)_{i=1}^k$.  There exists a rank $k-n$ vector bundle
$\pi_2:E_2\rightarrow M$ with a moving Parseval frame
$(g_i)_{i=1}^k$ so that $(f_i\oplus g_i)_{i=1}^k$ is a moving
orthonormal basis for the vector bundle $\pi_1\oplus\pi_2:E_1\oplus
E_2\rightarrow M$.
\end{thm}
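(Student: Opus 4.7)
The plan is to mimic the Han--Larson dilation theorem fiberwise while making sure everything depends smoothly on $x\in M$. The key move is to think of the frame $(f_i)_{i=1}^k$ as a single bundle map out of a trivial bundle, and then let $E_2$ be its kernel bundle.

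First I would define the smooth bundle map $T:M\times\R^k\to E_1$ fiberwise by
\[
T(x)(a_1,\ldots,a_k)=\sum_{i=1}^k a_i\,f_i(x).
\]
Smoothness is immediate from smoothness of the sections $f_i$. The Parseval condition at $x$ is equivalent to $T(x)T(x)^*=I_{\pi_1^{-1}(x)}$, so $T(x)$ is surjective for every $x$. In particular $T$ has constant rank $n$, so $E_2:=\ker T$ is a smooth rank $k-n$ subbundle of $M\times\R^k$, which I take as my candidate $\pi_2:E_2\to M$, equipped with the inner product inherited from $\R^k$.

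Next I would produce the sections $g_i$. Since $T(x)^*$ is an isometry whose range is $(\ker T(x))^\perp$, the operator $T(x)^*T(x)$ is the orthogonal projection onto $(\ker T(x))^\perp$, so $Q(x):=I_{\R^k}-T(x)^*T(x)$ is the orthogonal projection onto $E_2|_x$. Both $T$ and $T^*$ are smooth in $x$, so $Q$ is a smooth bundle endomorphism, and I set
\[
g_i(x):=Q(x)e_i\in E_2|_x,
\]
where $(e_i)_{i=1}^k$ is the standard orthonormal basis of $\R^k$. Because $(g_i(x))$ is the orthogonal projection of an orthonormal basis onto $E_2|_x$, it is a Parseval frame for $E_2|_x$, so $(g_i)_{i=1}^k$ is a moving Parseval frame for $\pi_2$ as required.

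Finally I would verify that $(f_i\oplus g_i)_{i=1}^k$ is a moving orthonormal basis for $\pi_1\oplus\pi_2$. Define a smooth bundle map
\[
\Phi:M\times\R^k\longrightarrow E_1\oplus E_2,\qquad \Phi(x,a)=T(x)a\,\oplus\,Q(x)a,
\]
which sends $e_i$ to $f_i(x)\oplus g_i(x)$. For any $a\in\R^k$, orthogonality of the projections $T(x)^*T(x)$ and $Q(x)$ gives
\[
\|T(x)a\|^2_{E_1|_x}+\|Q(x)a\|^2_{\R^k}
=\langle T(x)^*T(x)a,a\rangle+\langle Q(x)a,a\rangle
=\langle a,a\rangle,
\]
so $\Phi$ is a fiberwise isometry; since source and target have the same rank $k$, $\Phi$ is a bundle isomorphism. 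Therefore $(f_i\oplus g_i)_{i=1}^k$, being the image of the standard orthonormal basis under a bundle isometry, is a moving orthonormal basis for $\pi_1\oplus\pi_2$.

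The only real point requiring care is the smoothness of $E_2$ and of $Q$; but because $T(x)$ has constant rank $n$, the kernel fits together into a smooth subbundle and the orthogonal projection onto it depends smoothly on $x$. Everything else is the fiberwise Han--Larson dilation, verified by a one-line inner-product calculation.
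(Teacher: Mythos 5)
Your proof is correct, and it is essentially the paper's second proof of Theorem \ref{VB}: the paper embeds $E_1$ isometrically into the trivial bundle $M\times\R^k$ via the analysis operator $\theta(y)=\sum\langle y,f_i\rangle e_i$ (which is exactly your $T(x)^*$) and takes orthogonal projections of the standard basis, while you phrase the same construction through the synthesis operator $T$ and its kernel bundle $E_2=\ker T=(\operatorname{ran}T^*)^\perp$. Your writeup is, if anything, slightly more complete in explicitly verifying that $\Phi=T\oplus Q$ is a fiberwise isometry and hence carries the standard basis to a moving orthonormal basis; the paper's genuinely different argument is its first proof, which instead glues local complementary frames from Lemma \ref{LemmaLocal} with a partition of unity.
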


If $M$ is a Riemannian manifold with a moving Parseval frame for
$TM$, we may apply Theorem \ref{VB} to obtain a vector bundle
containing $TM$ with a moving orthonormal basis which projects to
the moving Parseval frame.  However, if we start with a moving
Parseval frame for a tangent bundle, we want to end up with a moving
orthonormal basis for a larger tangent bundle which projects to the
moving Parseval frame. This way we would remain in the class of
tangent bundles, instead of general vector bundles.  The following
theorem states that we can do this.

\begin{thm}\label{T:main}
Let $M^n$ be an $n$-dimensional Riemannian manifold and
$(f_i)_{i=1}^k$ be a moving Parseval frame for $TM$ for some $k\geq
n$.  There exists a $k$-dimensional Riemannian manifold $N^k$ with a
moving orthonormal basis $(e_i)_{i=1}^k$ for $TN$ such that $N^k$
contains $M^n$ as a submanifold and $P_{T_x M}e_i(x)=f_i(x)$ for all
$x\in M^n$ and $1\leq i\leq k$, where $P_{T_x M}$ is orthogonal
projection from $T_x N$ onto $T_x M$.
\end{thm}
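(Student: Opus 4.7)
The plan is to leverage Theorem \ref{VB} and then realize the enlarged bundle it produces as the tangent bundle of a manifold built from the complementary bundle. First, I would apply Theorem \ref{VB} to the rank-$n$ bundle $TM$ and the Parseval frame $(f_i)_{i=1}^k$, obtaining a rank $k-n$ vector bundle $\pi_2 \colon E_2 \to M$ with a moving Parseval frame $(g_i)_{i=1}^k$ such that $(f_i \oplus g_i)_{i=1}^k$ is a moving orthonormal basis of $TM \oplus E_2$; in particular $TM \oplus E_2$ is a trivial rank-$k$ bundle.

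The candidate for $N$ is the total space of $E_2$, regarded as a smooth $k$-manifold containing $M$ as the zero section. To analyze $TN$ I would fix any linear connection on $E_2$; it yields a horizontal/vertical splitting $TN = H \oplus V$ where $d\pi_2$ restricts to an isomorphism $H \to \pi_2^* TM$ and $V$ is canonically isomorphic to $\pi_2^* E_2$. Thus $TN \cong \pi_2^*(TM \oplus E_2)$, and as a pullback of a trivial bundle this is itself trivial, so $N$ is parallelizable and has a chance of admitting the required global orthonormal frame.

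Next I would define the Riemannian metric on $N$ and the frame $(e_i)$ simultaneously by transport across the isomorphism $TN \cong \pi_2^*(TM \oplus E_2)$: declare the metric on $TN$ to be the pullback of the direct-sum metric on $TM \oplus E_2$ (equivalently, $H \perp V$, with $H$ carrying the metric pulled back from $TM$ via $d\pi_2$ and $V$ the fiber metric of $E_2$), and let $e_i$ be the image of $\pi_2^*(f_i \oplus g_i)$ under the same isomorphism. Concretely the horizontal component of $e_i$ is the horizontal lift of $f_i$ and the vertical component is the section of $V$ determined by $g_i$. Since $(f_i \oplus g_i)$ is a moving orthonormal basis of $TM \oplus E_2$, the frame $(e_i)$ is automatically a moving orthonormal basis of $TN$.

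The last step is checking the compatibility along $M$. Because parallel transport in a vector bundle is linear and fixes the zero vector, horizontal curves through $(x,0)$ remain on the zero section, so the differential $ds_0$ of the zero section satisfies $ds_0(T_xM) = H_{(x,0)}$. This identifies $T_{(x,0)}N$ with the orthogonal direct sum $T_xM \oplus (E_2)_x$, makes $M \hookrightarrow N$ an isometric embedding, and yields $P_{T_xM}e_i(x,0) = f_i(x)$, as required. The step I expect to be most delicate is this simultaneous alignment of the connection, the metric, and the trivialization from Theorem \ref{VB}; once the metric on $TN$ is defined as the pullback of the direct-sum metric via the connection-induced isomorphism, however, the stated properties fall out, with smoothness following from that of the chosen connection and of $(f_i \oplus g_i)$.
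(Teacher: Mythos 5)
Your proposal is correct and follows essentially the same route as the paper: take $N$ to be the total space of the complementary bundle $E_2$ produced by Theorem \ref{VB}, identify $TN$ with $\pi_2^*(TM\oplus E_2)$ via a horizontal/vertical splitting, transport the direct-sum metric and the orthonormal frame $(f_i\oplus g_i)$ across that identification, and check the orthogonal decomposition along the zero section. The only difference is that you obtain the splitting from an arbitrary linear connection, whereas the paper constructs an explicit one from the Parseval frame $(g_i)$ via the transport map $\Theta(p,q)=\sum_{i=1}^k\langle g_i(\pi_2(p)),p\rangle\, g_i(\pi_2(q))$; both choices give the same verification that $H_{(x,0)}=ds_0(T_xM)$ and hence $P_{T_xM}e_i=f_i$.
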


Though the concept of a moving Parseval frame seems natural to
consider, we are aware of only one paper on the subject. In 2009, P.
Kuchment proved in his Institute of Physics select paper that
particular vector bundles over the torus, which arise in
mathematical physics, have natural moving Parseval frames but do not
have moving bases \cite{K}.  The relationship between frames for
Hilbert spaces and manifolds was also considered in a different
context by Dykema and Strawn, who studied the manifold structure of
collections of Parseval frames under certain equivalent classes
\cite{DySt}.

We will use the term {\em inner product} on a vector bundle
$\pi:E\rightarrow M$ to mean a positive definite symmetric bilinear
form.  All of our theorems will concern vector bundles with a given
inner product. In the case that our vector bundle is the tangent
bundle of a Riemannian manifold, we will take the inner product to
be the Riemannian metric. For terminology and background on vector
bundles and smooth manifolds see \cite{L}, for terminology and
background on frames for Hilbert spaces see \cite{C} and
\cite{HKLW}.

The majority of the research contained in this paper was conducted
at the 2009 Research Experience for Undergraduates in Matrix
Analysis and Wavelets organized by Dr David Larson.  The first
author was a research mentor for the program, and the second, third,
and fourth authors were participants.  We sincerely thank Dr Larson
for his advice and encouragement.

\section{Preliminaries and Examples}\label{S:2}

Our goal is to study moving Parseval frames and extend theorems
about fixed Parseval frames for Hilbert spaces to moving Parseval
frames for vector bundles.  To do this, we will first need to define
some notation and recall some useful characterizations of Parseval
frames for Hilbert spaces in terms of matrices. For $F=(f_i)_{i=1}^k
\in \oplus_{i=1}^k\R^n$ and $(u_i)_{i=1}^n$ a fixed orthonormal
basis for $\R^n$, we denote $[F]_{n\times k}$ to be the matrix whose
column vectors with respect to the basis $(u_i)_{i=1}^n$ are given
by $(f_i)_{i=1}^k$. For $F=(f_i)_{i=1}^k \in \oplus_{i=1}^k\R^n$,
$G=(f_i)_{i=1}^k \in \oplus_{i=1}^k\R^m$, we define $F \oplus
G=(f_i\oplus g_i)_{i=1}^k \in \oplus_{i=1}^k\R^{n+m}$. If $k>n$,
$(u_i)_{i=1}^n$ is a fixed orthonormal basis for $\R^n$ and
$(u_i)_{i=n+1}^k$ is a fixed orthonormal basis for $\R^{k-n}$, then
the matrix $[F \oplus G]_{(n+m)\times k}$ given with respect to
$(u_i)_{i=1}^k$ will be formed by appending the column vectors
$(g_i)_{i=1}^k$ to the column vectors $(f_i)_{i=1}^k$. In other
words,
$[F \oplus G]_{(n+m)\times k} = \pmat f_1 & \cdots & f_k \\
g_1 & \cdots & g_k \ppmat$.  This matrix framework allows us to
provide a simple proof of the Han-Larson dilation theorem for
Parseval frames for $\R^n$.  In a later section we will extend this
proof to vector bundles.

\begin{thm}\cite{HL}\label{HL dilation}
If $k>n$, and $(f_i)_{i=1}^k$ is a Parseval frame for $\R^n$, then
there exists a Parseval frame $(g_i)_{i=1}^k$ for $\R^{k-n}$ such
that $(f_i\oplus g_i)_{i=1}^k$ is an orthonormal basis
 for $\R^n\oplus\R^{k-n}$.
\end{thm}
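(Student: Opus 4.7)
The plan is to reformulate everything in terms of the matrix $[F]_{n\times k}$ and exploit the equivalence: $(f_i)_{i=1}^k$ is a Parseval frame for $\R^n$ if and only if $[F][F]^{\ast}=I_n$, i.e., the $n$ rows of $[F]$ form an orthonormal set of vectors in $\R^k$. This equivalence is just a rewriting of the reconstruction identity $\sum \langle y,f_i\rangle f_i = y$ as $[F][F]^{\ast} y = y$ for all $y\in\R^n$.

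First I would observe that $[F]$ has $n$ orthonormal rows living in $\R^k$. Since $k>n$, I can use Gram--Schmidt (or any extension procedure for orthonormal sets) to append $k-n$ additional row vectors $r_{n+1},\dots,r_k$ in $\R^k$ so that the full collection $r_1,\dots,r_k$ is an orthonormal basis for $\R^k$. Let $[G]_{(k-n)\times k}$ be the matrix whose rows are $r_{n+1},\dots,r_k$, and define $g_i\in\R^{k-n}$ to be the $i$-th column of $[G]$ (with respect to the chosen basis). Then by construction the $k\times k$ block matrix
\[
\begin{pmatrix} [F] \\ [G] \end{pmatrix} = \bigl[\, f_1\oplus g_1 \; \big| \; \cdots \; \big| \; f_k\oplus g_k \,\bigr]
\]
has orthonormal rows, so it is an orthogonal matrix, and therefore its columns $(f_i\oplus g_i)_{i=1}^k$ also form an orthonormal basis of $\R^n\oplus\R^{k-n}$.

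It remains to verify that $(g_i)_{i=1}^k$ is itself a Parseval frame for $\R^{k-n}$. But this is immediate from the matrix characterization: the rows of $[G]$ are orthonormal in $\R^k$ by construction, hence $[G][G]^{\ast}=I_{k-n}$, which is equivalent to $(g_i)_{i=1}^k$ being a Parseval frame for $\R^{k-n}$.

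There is no real obstacle here beyond recognizing the row-orthonormality characterization and the fact that any orthonormal set in $\R^k$ of size less than $k$ extends to an orthonormal basis. The appeal of this matrix argument is that it avoids invoking a larger ambient Hilbert space abstractly, and — more importantly for later use — it is an argument that can be implemented pointwise and then glued together in the vector-bundle setting of Theorem \ref{VB}, which is presumably why the authors chose to record it in this form.
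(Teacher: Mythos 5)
Your proposal is correct and follows essentially the same route as the paper: identify Parseval frames with matrices having orthonormal rows, extend those $n$ rows to an orthonormal basis of $\R^k$, and read off the columns of the resulting orthogonal matrix. The only cosmetic difference is that you obtain row-orthonormality directly from the identity $[F][F]^{\ast}=I_n$, whereas the paper derives it by a polarization computation with the Parseval identity applied to $u_p+u_q$; both are valid.
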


\begin{proof}
We denote the unit vector basis for $\R^n$ by $(u_i)_{i=1}^n$.  Let
$F=(f_i)_{i=1}^k$ and let $[F]_{n\times k}$ be the matrix whose
column vectors with respect to $(u_i)_{i=1}^n$ are given by
$(f_i)_{i=1}^k$.  If $1\leq p,q\leq n$, then the inner product of
the $p$th row of $T$ with the $q$th row of $T$ is given by
$\sum_{i=1}^k\langle f_i,u_p \rangle\langle f_i,u_q \rangle$.  We
now use the following equality.
\begin{align*} 2=\sum_{i=1}^k\langle f_i,u_p+u_q\rangle^2=&\sum_{i=1}^k\langle
f_i,u_p\rangle^2+\sum_{i=1}^k\langle
f_i,u_q\rangle^2+2\sum_{i=1}^k\langle f_i,u_p\rangle\langle
f_i,u_q\rangle\\
=&2+2\sum_{i=1}^k\langle f_i,u_p\rangle\langle f_i,u_q\rangle
\end{align*}
Thus we have that $\sum_{i=1}^k\langle f_i,u_p \rangle\langle
f_i,u_q \rangle=0$, and hence the rows of $[F]_{n\times k}$ are
orthonormal. We can thus choose $G=(g_i)_{i=1}^k\subset \R^{k-n}$
such that the rows of $[F\oplus G]_{k\times k}$ are orthonormal.
Thus the column vectors $(f_i\oplus g_i)_{i=1}^k$ of $[F\oplus
G]_{k\times k}$ form an orthonormal basis for $\R^n\oplus\R^{k-n}$.
We have that $(g_i)_{i=1}^k$ must be a Parseval frame for $\R^{k-n}$
as it is the orthogonal projection of the orthonormal basis
$(f_i\oplus g_i)_{i=1}^k$.
\end{proof}

As shown in the proof of Theorem \ref{HL dilation}, a sequence of
vectors $F=(f_i)_{i=1}^k\subset \R^n$ is a Parseval frame for $\R^n$
if and only if the matrix $[F]_{n\times k}$ has orthonormal rows.
The dilation theorem gives that Parseval frames are exactly
orthogonal projections of orthonormal bases.  It is then immediate
that the orthogonal projection of a moving orthonormal basis is a
moving Parseval frame.

\begin{thm}\label{projection thm}
Let $k\geq n$ and let $\pi: E\rightarrow M$ be a rank $k$ vector
bundle with an inner product $\langle\cdot,\cdot\rangle$ and moving
orthonormal basis $(e_i)_{i=1}^k$.  If $\pi|_{E_0}:E_0\rightarrow M$
is a rank $n$ sub-bundle, then $(P_{E_0}e_i)_{i=1}^k$ is a moving
Parseval frame for $\pi|_{E_0}:E_0\rightarrow M$, where
$P_{E_0}(e_i(x))$ is the orthogonal projection of $e_i(x)$ onto the
fiber $\pi|_{E_0}^{-1}(x)$ for all $x\in M$.
\end{thm}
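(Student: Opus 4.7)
The plan is to check two things separately: (i) for each fixed $x \in M$, the vectors $(P_{E_0}e_i(x))_{i=1}^k$ satisfy the Parseval reconstruction identity in the fiber $\pi|_{E_0}^{-1}(x)$; and (ii) each $P_{E_0}e_i$ is a smooth section of $E_0$, so that Definition \ref{frame} actually applies. The first step is pointwise linear algebra and the second is a local smoothness check.

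For (i), I would fix $x \in M$ and $y \in \pi|_{E_0}^{-1}(x)$. The key observation is that because $P_{E_0}$ is self-adjoint on the fiber $\pi^{-1}(x)$ and $y$ already lies in $E_0|_x$, one has $\langle y, P_{E_0}e_i(x)\rangle = \langle P_{E_0}y, e_i(x)\rangle = \langle y, e_i(x)\rangle$ for every $i$. Then, using linearity of $P_{E_0}$ together with the orthonormal basis expansion of $y \in \pi^{-1}(x)$ in $(e_i(x))_{i=1}^k$,
\[
\sum_{i=1}^k \langle y, P_{E_0}e_i(x)\rangle\, P_{E_0}e_i(x) = P_{E_0}\!\Bigl(\sum_{i=1}^k \langle y, e_i(x)\rangle\, e_i(x)\Bigr) = P_{E_0}\,y = y,
\]
which is exactly the Parseval identity required in Definition \ref{frame}.

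For (ii), I would argue locally. Around any point of $M$ we can choose a local trivialization of $E_0$, and applying Gram-Schmidt to the resulting local moving basis in the smoothly varying inner product on $E_0$ yields a smooth local orthonormal frame $(u_j)_{j=1}^n$ for $E_0$ on some neighborhood $U$. Since $e_i$ is a smooth section of $E$ and the inner product on $E$ is smooth, each coefficient $\langle e_i, u_j\rangle$ is a smooth function on $U$, so $P_{E_0}e_i = \sum_{j=1}^n \langle e_i, u_j\rangle\, u_j$ is a smooth section of $E_0$ over $U$. Since smoothness is a local property, $P_{E_0}e_i$ is smooth globally. The only non-formal input is the construction of a smooth local orthonormal frame for the sub-bundle; everything else is immediate from the self-adjointness of the projection and the ONB property of $(e_i)$, and I do not anticipate any real obstacle beyond writing this out.
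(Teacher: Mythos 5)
Your proposal is correct and follows essentially the same route as the paper: project the orthonormal basis onto the sub-bundle fiberwise and note that this yields a Parseval frame in each fiber. The paper simply asserts the two facts you verify in detail (that the orthogonal projection of an orthonormal basis is a Parseval frame, and that $P_{E_0}e_i$ is a smooth section), so your write-up is just a fleshed-out version of the same argument.
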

\begin{proof}
As $\pi|_{E_0}:E_0\rightarrow M$ is a subbundle of $\pi:
E\rightarrow M$, we have that $P_{E_0}:E\rightarrow E_0$ is
continuous.  Furthermore, for all $1\leq i\leq k$, we have that
$\pi|_{E_0}(P_{E_0}(e_i(x)))=x$ for all $x\in M$.  Thus $P_{E_0}e_i$
is a section of $\pi|_{E_0}:E_0\rightarrow M$ for all $1\leq i\leq
k$. $(P_{E_0}e_i(x))_{i=1}^k$ is a Parseval frame for
$\pi|_{E_0}^{-1}(x)$ for all $x\in M$, as it is the orthogonal
projection of an orthonormal basis.  Thus $(P_{E_0}e_i(x))_{i=1}^k$
is a moving Parseval frame for $\pi|_{E_0}:E_0\rightarrow M$.
\end{proof}

By applying Theorem \ref{projection thm} to the tangent bundle of a
smooth manifold, we obtain the following corollary for Riemannian
manifolds.

\begin{cor}\label{projection cor}
Let $k\geq n$ and let $N$ be a $k$-dimensional Riemannian manifold
with a moving orthonormal basis $(e_i)_{i=1}^k$ for its tangent
bundle $TN$. If $M\subset N$ is a smooth sub-manifold, then
$(P_{TM}e_i)_{i=1}^k$ is a moving Parseval frame for $TM$, where
$P_{TM}(e_i(x))$ is the orthogonal projection of $e_i(x)\in T_x N$
onto $T_{x}M$ for all $x\in M$ and $1\leq i\leq k$.
\end{cor}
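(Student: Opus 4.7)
The plan is to reduce the corollary to Theorem \ref{projection thm} by recognizing the corollary as the special case where the ambient vector bundle is the tangent bundle of $N$ and the sub-bundle is the tangent bundle of $M\subset N$. The only work is in making sure the hypotheses of Theorem \ref{projection thm} are set up properly, since $TN$ is naturally a vector bundle over $N$ rather than over $M$.

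First I would pass from $TN$ to its restriction $TN|_M$, i.e.\ the pullback of $\pi:TN\to N$ along the inclusion $\iota:M\hookrightarrow N$. This yields a rank $k$ vector bundle $\pi|_{TN|_M}:TN|_M\to M$, whose fiber over $x\in M$ is $T_x N$, equipped with the restriction of the Riemannian inner product. Restricting each $e_i$ to $M$ gives smooth sections $e_i|_M:M\to TN|_M$, and because $(e_i(x))_{i=1}^k$ was an orthonormal basis for $T_x N$ at every $x\in N$, in particular at every $x\in M$, we obtain a moving orthonormal basis $(e_i|_M)_{i=1}^k$ for $TN|_M$.

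Next I would check that $TM$ sits inside $TN|_M$ as a smooth rank $n$ sub-bundle. This is a standard fact from differential geometry: at each $x\in M$ the tangent space $T_x M$ is a linear subspace of $T_x N$ via the differential $d\iota_x$, and local smoothness of the inclusion of $TM$ into $TN|_M$ follows from the existence of slice charts for the submanifold $M\subset N$. With this observation, the hypotheses of Theorem \ref{projection thm} are satisfied with $E=TN|_M$, $E_0=TM$, and moving orthonormal basis $(e_i|_M)_{i=1}^k$.

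Applying Theorem \ref{projection thm} then produces the moving Parseval frame $(P_{TM}(e_i|_M))_{i=1}^k$ for $TM$, which is exactly the sequence $(P_{TM}e_i)_{i=1}^k$ appearing in the statement. The main (mild) obstacle is the bookkeeping step of passing from the bundle $TN\to N$ to the restricted bundle $TN|_M\to M$ and verifying that $TM$ is a smooth sub-bundle; once that is in place the conclusion follows immediately from the preceding theorem with no additional calculation.
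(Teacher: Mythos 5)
Your proposal is correct and follows essentially the same route as the paper: restrict the bundle $TN$ to $M$ (the paper writes this as $\pi|_{\pi^{-1}(M)}:\pi^{-1}(M)\to M$), observe that the restricted sections $(e_i|_M)_{i=1}^k$ form a moving orthonormal basis containing $TM$ as a sub-bundle, and apply Theorem \ref{projection thm}. The extra detail you supply about slice charts and the differential of the inclusion is a harmless elaboration of what the paper takes as standard.
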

\begin{proof}
Let $\pi:TN\rightarrow N$ be the tangent bundle for $N$.  Then
$(e_i|_M)_{i=1}^k$ is a moving orthonormal basis for the vector
bundle $\pi|_{\pi^{-1}(M)}:\pi^{-1}(M)\rightarrow M$, which contains
$TM$ as a sub-bundle.   We may thus apply Theorem \ref{projection
thm}.
\end{proof}

For example, the two dimensional sphere $S^2$ does not have a moving
basis for its tangent space, as it does not have a nowhere zero
vector field.  However, if we consider $\R^3$ to be a Riemanian
manifold with the Riemannian metric given by the dot product, then
 $(e_i)_{i=1}^3$ is a moving orthonormal basis for $T\R^3$, where $e_1(x,y,z)=(1,0,0)$,
$e_2(x,y,z)=(0,1,0)$, and $e_3(x,y,z)=(0,0,1)$ for all $(x,y,z)\in
\R^3$.  We can then project $(e_i)_{i=1}^3$ onto the tangent bundle
of the unit sphere to obtain a moving Parseval frame $(f_i)_{i=1}^3$
for $TS^2$.  In this case, $(f_i)_{i=1}^3$ will be defined by
$f_1(x,y,z)=(1-x^2,-xy,-xz)$, $ f_2(x,y,z)=(-xy,1-y^2,-yz)$, and
$f_3(x,y,z)=(-xz,-yz,1-z^2)$ for all $(x,y,z)\in S^2$.

If $M$ is an $n$ dimensional smooth manifold, and $\phi:M\rightarrow
N$ is an embedds into a $k$ dimensional Riemannian manifold $N$ with
a moving orthonormal basis for $TN$, then we can project the moving
orthonormal basis onto $T\phi(M)$ and then pull it back to obtain a
moving Parseval frame for $TM$ of $k$ vectors. Furthermore, this may
be done if $\phi$ is only an immersion instead of an embedding. The
Whitney immersion theorem gives that for all $n\geq 2$, every $n$
dimensional paracompact smooth manifold immerses in $\R^{2n-1}$.
Thus every $n$ dimensional smooth manifold has a moving Parseval
frame for its tangent bundle of $2n-1$ vectors.  When considering
$n=2$, we have that $\R^{2}$, the cylinder and the torus are the
only two dimensional manifolds with continuous moving basis for its
tangent bundle. However, every two dimensional paracompact smooth
manifold has a moving Parseval frame of three vectors obtained by
immersing the manifold in $\R^3$. Unfortunately, obtaining a moving
Parseval frame in this way often does not lend us much intuition
about the space in question. We present here an intuitive moving
Parseval frame for the tangent bundles of the M$\ddot{o}$bius strip
and Klein bottle which cannot be obtained by immersing in $\R^3$
with the usual orthonormal basis, but which reflects the topology of
the surface.

\begin{Example}

We represent the M$\ddot{o}$bius strip and Klein bottle in the
standard way with the square $[0,1]\times[0,1]$, where  we identify
the top and bottom according to $(x,1)\equiv (1-x,0)$ for all $0\leq
x\leq 1$, and for the Klein bottle we identify the sides according
to $(1,y)\equiv (0,y)$ for all $0\leq y\leq1$, as seen in Figure 1.

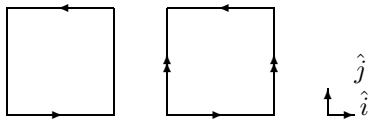
\begin{figure}[hbt]
\setlength{\unitlength}{0.14in} % selecting unit length
\centering % used for centering Figure
\begin{picture}(14,4.5) % picture environment with the size (dimensions)

\put(2.1,0){\vector(1,0){0}} \put(0,0){\line(1,0){4}}
\put(1.9,4){\vector(-1,0){0}} \put(0,4){\line(1,0){4}}
\put(0,0){\line(0,1){4}} \put(4,0){\line(0,1){4}}

\put(8.1,0){\vector(1,0){0}} \put(6,0){\line(1,0){4}}
\put(7.9,4){\vector(-1,0){0}} \put(6,4){\line(1,0){4}}
\put(6,2.3){\vector(0,1){0}} \put(6,2){\vector(0,1){0}}
\put(6,0){\line(0,1){4}} \put(10,2){\vector(0,1){0}}
\put(10,2.3){\vector(0,1){0}} \put(10,0){\line(0,1){4}}

\put(12,0){\vector(1,0){1}} \put(13.2,0) {$\hat{i}$}
\put(12,0){\vector(0,1){1}} \put(13,1.4) {$\hat{j}$}

\end{picture}
\caption{Aligned M$\ddot{o}$bius Strip (left) and Klein bottle
(right)}
\end{figure}

For all $(x, y)\in[0,1]\times[0,1]$, let
\begin{equation*}
f_1(x,y) = (cos(\pi y),0)\quad f_2(x,y) = (sin(\pi y),0)\quad
f_3(x,y) = (0,1).
\end{equation*}
It is easy to see that $(f_i)_{i=1}^3$ is a moving Parseval frame
for both the M$\ddot{o}$bius strip and the Klein bottle, which
naturally shows the twist in their topology.

\end{Example}

Given a vector bundle $\pi_1:E_1\rightarrow M$, it is a classic
problem in differential topology to find a vector bundle
$\pi_2:E_2\rightarrow M$ so that $\pi_1\oplus\pi_2:E_1\oplus
E_2\rightarrow M$ has a moving basis. This is of course closely
related to our work. Before proving Theorem \ref{VB}, we need to
show that our condition that $\pi_1\oplus\pi_2:E_1\oplus
E_2\rightarrow M$ has an orthonormal basis which projects to a given
Parseval frame for $\pi_1:E_1\rightarrow M$ is in fact stronger in
general than the condition that $\pi_1\oplus\pi_2:E_1\oplus
E_2\rightarrow M$ simply has a basis. Thus the dilation theorems for
moving Parseval frames do not follow as corollaries from known
results in differential topology.  This will be illustrated by the
following simple example.

\begin{Example}\label{Sphere example}
We define a moving Parseval frame $(f_i)_{i=1}^3$ for the vector
bundle $S^2\times\R$ by $f_1\equiv1$ and $f_2\equiv f_3\equiv0$. The
normal bundle to $TS^2\subset T\R^3$ is simply $S^2\times\R$, and
thus $(S^2\times\R)\oplus TS^2\cong S^2\times\R^3$ has a moving
basis. However, we claim that there does not exist a moving
 basis $(e_i)_{i=1}^3$ for $(S^2\times\R)\oplus TS^2$
such that $P_{S^2\times\R}e_i=f_i$ for all $i=1,2,3$.  Indeed, if
$P_{S^2\times\R}e_2=f_2=0$ then $P_{TS^2}e_2$ is nowhere zero.
However, $S^2$ does not have a nowhere zero vector field, and thus
we have a contradiction.
\end{Example}

We have a case of two vector bundles $\pi_1:E_1\rightarrow M$ and
$\pi_2:E_2\rightarrow M$ and  a moving Parseval frame
$(f_{i})_{i=1}^k$ for $\pi_1$ which does not dilate to a moving
basis for $\pi_1\oplus\pi_2:E_1\oplus E_2\rightarrow M$, even though
$\pi_1\oplus\pi_2:E_1\oplus E_2\rightarrow M$ has a moving basis of
$k$ vectors. This motivates the following question.  What properties
of a moving Parseval frame $(f_i)_{i=1}^k$ for a vector bundle
$\pi_1:E_1\rightarrow M$ would guarantee that if
$\pi_1\oplus\pi_2:E_1\oplus E_2\rightarrow M$ has a moving basis of
$k$ vectors, then $\pi_1\oplus\pi_2:E_1\oplus E_2\rightarrow M$ has
a moving orthonormal basis which projects to $(f_i)_{i=1}^k$? The
following theorem answers this question when $k=n+1$, where $n$ is
the rank of the vector bundle $\pi_1$.

\begin{thm}\label{T:n+1}
Let $(f_i)_{i=1}^{n+1}$ be a moving Parseval frame for a rank $n$
vector bundle $\pi_1:E_1\rightarrow M$.   If $\pi_2:E_2\rightarrow
M$ is a rank 1 vector bundle such that $\pi_1\oplus\pi_2:E_1\oplus
E_2\rightarrow M$ has a moving basis, then $\pi_2: E_2\rightarrow M$
has a moving Parseval frame $(g_i)_{i=1}^{n+1}$ such that
$(f_i\oplus g_i)_{i=1}^{n+1}$ is a moving orthonormal basis for
$\pi_1\oplus\pi_2:E_1\oplus E_2\rightarrow M$.
\end{thm}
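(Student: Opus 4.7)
The plan is to build the dilation from a natural splitting of the trivial rank $n+1$ bundle that the Parseval frame $(f_i)$ already induces, and then identify the resulting line bundle with $E_2$. Applying fiberwise Gram--Schmidt to the given moving basis of $E_1\oplus E_2$ produces a moving orthonormal basis, giving an isometric isomorphism $E_1\oplus E_2\cong M\times\R^{n+1}$. Next I would consider the bundle morphism $\Phi:M\times\R^{n+1}\to E_1$ defined by $(c_1,\ldots,c_{n+1})\mapsto\sum c_if_i$. The Parseval identity is equivalent to $\Phi\Phi^*=\mathrm{id}_{E_1}$, so $\Phi$ is a fiberwise coisometry; in particular $\ker\Phi$ is a smooth rank one subbundle of $M\times\R^{n+1}$, and $\Phi$ restricts to an isometric isomorphism $(\ker\Phi)^\perp\to E_1$, yielding an isometric splitting $M\times\R^{n+1}\cong E_1\oplus\ker\Phi$.

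The central step is to identify $\ker\Phi$ with $E_2$ as rank one bundles. From the two isometric splittings $E_1\oplus\ker\Phi\cong M\times\R^{n+1}\cong E_1\oplus E_2$, taking top exterior powers yields $(\det E_1)\otimes\ker\Phi\cong(\det E_1)\otimes E_2$, both sides being the trivial line bundle. Tensoring with $(\det E_1)^*$ and using the canonical isomorphism $L\otimes L^*\cong M\times\R$ gives $\ker\Phi\cong E_2$. Let $U:\ker\Phi\to E_2$ be an isometric bundle isomorphism; such a $U$ exists because any smooth isomorphism of real line bundles can be rescaled fiberwise by its pointwise operator norm to be isometric.

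With $U$ in hand, I would define $g_i$ locally as follows. On any open set where $\ker\Phi$ admits a smooth unit section $\kappa$, set $g_i(x)=\kappa_i(x)\,U(\kappa(x))\in E_2(x)$, where $\kappa_i$ denotes the $i$th coordinate of $\kappa(x)\in\R^{n+1}$ in the standard basis. The only ambiguity in $\kappa$ is an overall sign, but replacing $\kappa$ by $-\kappa$ flips both $\kappa_i$ and $U(\kappa)$, leaving the product $g_i$ invariant; hence the locally defined $g_i$ patch to global smooth sections of $E_2$.

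Finally I would verify that $(f_i\oplus g_i)_{i=1}^{n+1}$ is orthonormal. Since $f_i\in E_1$ and $g_j\in E_2$ are orthogonal in $E_1\oplus E_2$,
\[
\langle f_i\oplus g_i,\,f_j\oplus g_j\rangle=\langle f_i,f_j\rangle+\kappa_i(x)\kappa_j(x)\|U(\kappa(x))\|^2=\langle f_i,f_j\rangle+\kappa_i\kappa_j,
\]
and the standard coisometry identity $I-\Phi^*\Phi=P_{\ker\Phi}=\kappa\kappa^T$ gives $\kappa_i\kappa_j=\delta_{ij}-\langle f_i,f_j\rangle$, so the inner product equals $\delta_{ij}$. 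The Parseval property of $(g_i)_{i=1}^{n+1}$ for $E_2$ then follows from Theorem~\ref{projection thm}. The main obstacle is the line bundle identification $\ker\Phi\cong E_2$: this is precisely where the hypothesis that $E_1\oplus E_2$ admits a moving basis is used, and without it the pointwise sign ambiguity in the Han--Larson dilation cannot in general be resolved to a global smooth choice.
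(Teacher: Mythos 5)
Your argument is correct, but it takes a genuinely different route from the paper's. The paper works locally and then globalizes: Lemma \ref{LemmaLocal} produces, near each point, a complementary frame $(g_{x,i})_{i=1}^{n+1}$ that is unique up to an overall sign, and the moving-basis hypothesis is used to define a global determinant function $\det_e$; normalizing each local choice so that $\det_e[f_i\oplus g_{x,i}]=1$ makes it unique, hence consistent on overlaps, hence global. You instead argue globally from the start: the synthesis map $\Phi:M\times\R^{n+1}\to E_1$ is a fiberwise coisometry whose kernel is a smooth line bundle (it has constant rank $n$, a point worth stating explicitly), the $\pm$ ambiguity disappears because your formula $g_i=\kappa_i\,U(\kappa)$ is quadratic in the local unit section $\kappa$, and the moving-basis hypothesis enters only through the determinant-line-bundle computation identifying $\ker\Phi\cong E_2$. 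The paper's proof is more elementary and reuses the same local-to-global pattern as its proof of Theorem \ref{VB}; yours isolates the topological obstruction cleanly --- all that is really used is $w_1(E_2)=w_1(E_1)$, i.e.\ orientability of $E_1\oplus E_2$ --- and since your construction then exhibits $E_1\oplus E_2\cong E_1\oplus\ker\Phi\cong M\times\R^{n+1}$, it in fact shows that this apparently weaker hypothesis already implies the one stated, a small sharpening of the theorem. Your closing verification via $I-\Phi^*\Phi=\kappa\kappa^{T}$ is also a nice replacement for the paper's appeal to uniqueness of the rank-one Han--Larson complement.
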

\begin{proof}
 If $\pi_1\oplus\pi_2:E_1\oplus
E_2\rightarrow M$ has a moving orthonormal basis
$(e_i)_{i=1}^{n+1}$, then the determinant of an operator or matrix
with respect to $(e_i)_{i=1}^{n+1}$ varies smoothly over
$\pi_1\oplus\pi_2:E_1\oplus E_2\rightarrow M$.  If $T$ is an
operator or matrix, we will denote $\det_e(T)$ to be the determinant
of $T$ with respect to $(e_i)_{i=1}^{n+1}$.

We will first prove the result locally, and then we will show that
our local choice can actually be made globally.  By Lemma
\ref{LemmaLocal}, for each $x\in M$ there exists $\vp_x>0$ and a
smoothly varying frame $(g_{x,i})_{i=1}^{n+1}$ for
$\pi_2|_{\pi_2^{-1}(B_{\vp_x}(x))}$ such that $(f_i\oplus
g_{x,i})_{i=1}^{n+1}$ is a moving orthonormal basis for
$\pi_1|_{\pi_1^{-1}(B_{\vp_x}(x))}\oplus\pi_2|_{\pi_2^{-1}(B_{\vp_x}(x))}$.
For each $x,y\in M$, we denote $[f_i(y)\oplus g_{x,i}(y)]_{k\times
k}$ to be the matrix with respect to the basis
$(e_i(x))_{i=1}^{n+1}$ whose column vectors are $(f_i(y)\oplus
g_{x,i}(y))_{i=1}^{n+1}$.  It is easy to see that $(f_i(y)\oplus
g_{x,i}(y))_{i=1}^{n+1}$ is an orthonormal basis if and only if
$(f_i(y)\oplus -g_{x,i}(y))_{i=1}^{n+1}$ is an orthonormal basis.
Thus without loss of generality, we may assume that
$(g_{x,i})_{i=1}^{n+1}$ has been chosen such that
$\det_e[f_i(x)\oplus g_{x,i}(x)]_{k\times k}=1$ for all $x\in X$,
and hence $\det_e[f_i(y)\oplus g_{x,i}(y)]_{k\times k}=1$ for all
$x\in X$ and $y\in B_{\vp_x}(x)$ as $\det_e$ is continuous.
 As the span of
$(f_i(y)\oplus 0)_{i=1}^{n+1}$ has co-dimension 1, there is exactly
one choice for $(g_{x,i}(y))_{i=1}^{n+1}$ such that $(f_i(y)\oplus
g_{x,i}(y))_{i=1}^{n+1}$ is orthonormal and $\det_e[f_i(y)\oplus
g_{x,i}(y)]_{k\times k}=1$.  Thus our locally smooth choice was
unique, and hence is smooth globally.
\end{proof}

\section{Proofs of Dilation Theorems}\label{S:3}

%Define $\frm{n,k}$ to be the set of all frames of $k$ vectors lying in $\R^n$.

%Define $\prv{k,n} \subset \frm{n,k}$ to be the set of all frames of $k$ vectors lying in $\R^n$ that are Parseval.
We denote the set of all Parseval frames of $k$ vectors for $\R^n$
by $\prv{k,n}$. Specifically,
$\prv{k,n}=\{(f_i)_{i=1}^k\in\oplus_{i=1}^k\R^n:
(f_i)_{i=1}^k\textrm{ is a Parseval frame for }\R^n\}$.  In order to
study moving Parseval frames over smooth manifolds, we need to first
establish that $\prv{k,n}$ itself is a smooth manifold.

\begin{thm}
 For every $k\geq n$, the set
$\prv{k,n}$ is a smooth submanifold of $\oplus_{i=1}^k\R^n$ of
dimension $kn-n(n+1)/2$.
\end{thm}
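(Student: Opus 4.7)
The plan is to realize $\prv{k,n}$ as the preimage of a regular value under a smooth map between Euclidean spaces and then invoke the regular value theorem from differential topology. The set $\prv{k,n}$ is essentially the real (noncompact form of the) Stiefel manifold of $n$-frames in $\R^k$, and its dimension matches $kn - n(n+1)/2$, which is a good sanity check on the target.

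First, I would identify $\oplus_{i=1}^k \R^n$ with the space $M_{n\times k}(\R)$ of $n\times k$ real matrices via $F=(f_i)_{i=1}^k \mapsto [F]_{n\times k}$. As observed in the proof of Theorem \ref{HL dilation}, $F$ is a Parseval frame for $\R^n$ if and only if the rows of $[F]_{n\times k}$ are orthonormal, which is equivalent to the single matrix identity $[F]_{n\times k}[F]_{n\times k}^T = I_n$. So $\prv{k,n}$ coincides with the solution set of $AA^T = I_n$ inside $M_{n\times k}(\R)$.

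Next, let $\mathrm{Sym}_n(\R)$ denote the vector space of real symmetric $n\times n$ matrices, which has dimension $n(n+1)/2$, and define the smooth map
\[
\Phi : M_{n\times k}(\R) \longrightarrow \mathrm{Sym}_n(\R), \qquad \Phi(A) = AA^T.
\]
Then $\prv{k,n} = \Phi^{-1}(I_n)$, and the conclusion will follow from the regular value theorem provided $I_n$ is a regular value of $\Phi$.

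The main (and essentially only substantive) step is showing that the differential of $\Phi$ is surjective at every point $A \in \prv{k,n}$. A direct computation yields
\[
d\Phi_A(H) = HA^T + AH^T, \qquad H \in M_{n\times k}(\R),
\]
which does take values in $\mathrm{Sym}_n(\R)$. Given any $S \in \mathrm{Sym}_n(\R)$, I would choose $H = \tfrac{1}{2} S A$; then, using $AA^T = I_n$ and $S^T = S$,
\[
d\Phi_A(H) = \tfrac{1}{2} S A A^T + \tfrac{1}{2} A A^T S^T = \tfrac{1}{2} S + \tfrac{1}{2} S = S.
\]
Hence $d\Phi_A$ is surjective for every $A\in\prv{k,n}$, so $I_n$ is a regular value. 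The regular value theorem then gives that $\prv{k,n}$ is a smooth submanifold of $M_{n\times k}(\R) \cong \oplus_{i=1}^k\R^n$ of codimension $\dim \mathrm{Sym}_n(\R) = n(n+1)/2$, i.e.\ of dimension $kn - n(n+1)/2$. No real obstacle is anticipated; the only non-mechanical ingredient is the surjectivity trick $H = \tfrac{1}{2} SA$, which is the standard device for presenting Stiefel manifolds as regular level sets.
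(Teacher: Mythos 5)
Your proof is correct, and it works with the same level set the paper does: under the identification of $\oplus_{i=1}^k\R^n$ with $n\times k$ matrices, your map $\Phi(A)=AA^T$ is exactly the paper's frame-operator map $\phi(\mathbf{F})=S_{\mathbf{F}}$, and $\prv{k,n}=\Phi^{-1}(I_n)=\phi^{-1}(Id)$. Where you genuinely diverge is in how regularity is certified. You compute $d\Phi_A(H)=HA^T+AH^T$ and exhibit an explicit preimage $H=\tfrac12 SA$ of an arbitrary symmetric $S$, so $I_n$ itself is shown to be a regular value and the regular value theorem applies directly. The paper avoids computing the differential altogether: it invokes Sard's theorem to obtain \emph{some} positive definite regular value $A$, and then carries the submanifold $\phi^{-1}(A)$ onto $\phi^{-1}(Id)$ by the ambient diffeomorphism $\psi_A\colon (f_i)\mapsto (A^{-1/2}f_i)$, using the equivariance $\phi\circ\psi_A(\mathbf{F})=A^{-1/2}\phi(\mathbf{F})A^{-1/2}$. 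Your route is more elementary and self-contained (no appeal to Sard), and it gives slightly more: regularity at every point of the fiber over any invertible value (take $H=\tfrac12 S\,(AA^T)^{-1}A$), which immediately yields the generalization the paper only remarks on, that the frames with a prescribed invertible frame operator form a submanifold. The paper's route buys the ability to sidestep the derivative computation entirely by exploiting the symmetry $\mathbf{F}\mapsto A^{-1/2}\mathbf{F}$ of the problem. One small correction to a side remark: $\{A:AA^T=I_n\}$ is the \emph{compact} Stiefel manifold of orthonormal $n$-frames in $\R^k$ (each row is a unit vector, so the set is closed and bounded), not a noncompact form of it; this has no bearing on the argument.
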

\begin{proof}
If ${\bf F}=(f_i)_{i=1}^k\in \oplus_{i=1}^k\R^n$ then the positive
self-adjoint operator defined by $S_{\bf F}(x)=\sum_{i=1}^k\langle
x,f_i\rangle f_i$ for all $x\in\R^n$ is called the frame operator
for $(f_i)_{i=1}^k$.  It is clear that the map $\phi:
\oplus_{i=1}^k\R^n\rightarrow B(\R^n)$ given by $\phi({\bf
F})=S_{\bf F}$ is smooth and $\prv{k,n}=\phi^{-1}(Id)$. The set of
self-adjoint operators on $\R^n$ is naturally diffeomorphic to
$\R^{n(n+1)/2}$ as seen by fixing a basis and representing the
self-adjoint operators by symmetric matrices.  The positive definite
self-adjoint operators are an open subset of the self-adjoint
operators and thus form a smooth manifold.  By Sard's theorem there
exists a positive definite self-adjoint operator $A$ which is a
regular value of $\phi$, and thus $\phi^{-1}(A)$ is a smooth
submanifold of $\oplus_{i=1}^k\R^n$.  We have that $\phi^{-1}(A)$
has dimension $kn-n(n+1)/2$ as the manifold of positive definite
self adjoint operators has dimension $n(n+1)/2$ and $\R^{kn}$ has
dimension $kn$. We define a diffeomorphism $\psi_A:
\oplus_{i=1}^k\R^n\rightarrow \oplus_{i=1}^k\R^n$ by
$\psi_A((f_i)_{i=1}^k)=(A^{-1/2}f_i)_{i=1}^k$. As $A$ is self
adjoint we have that
$$\phi\circ\psi_A ({\bf F})(x)=\sum_{i=1}^k\langle x,A^{-\frac{1}{2}}f_i\rangle
A^{-\frac{1}{2}}f_i=A^{-\frac{1}{2}}\sum_{i=1}^k\langle
A^{-\frac{1}{2}}x,f_i\rangle f_i=A^{-\frac{1}{2}}\phi({\bf
F})A^{-\frac{1}{2}}x.
$$
Thus $\phi\circ\psi_A ({\bf F})=A^{-\frac{1}{2}}\phi({\bf
F})A^{-\frac{1}{2}}$, and hence $\psi_A(\phi^{-1}A)=\phi^{-1}(Id)$.
We conclude that $\prv{k,n}=\phi^{-1}(Id)$ is diffeomorphic to
$\phi^{-1}(A)$ and is hence a smooth submanifold of
$\oplus_{i=1}^k\R^n$ of dimension $kn-n(n+1)/2$.
\end{proof}

We note that the same proof gives that the set of frames of
$k$-vectors in $\R^n$ with a given invertible frame operator is a
smooth sub-manifold of $\oplus_{i=1}^k\R^n$.  We now prove that
locally, we can smoothly choose complementary frames for Parseval
frames.  Note that for $k\geq1$, $\prv{k,k}$ is the collection of
all orthonormal bases for $\R^k$.

\begin{lem}\label{LemmaLocal}

For every $k\geq n$ and $F \in \prv{k,n}$ there exists some
$\varepsilon
> 0$ and a smooth map
$\phi:B_\vp(F)\cap\prv{k,n}\rightarrow\prv{k,k-n}$ such that $G
\oplus \phi(G) \in \prv{k,k}$ for all $G\in B_\vp(F)\cap\prv{k,n}$.
%For $n < k$, there exists a finite collection of maps
%$\{G_i\}_{i=1}^N$, $G_i : \prv{k,n} \to \prv{1,k}$ such that for
%every $F \in \prv{k,n}$ there exists some $1 \leq i \leq N$ and
%$\varepsilon > 0$ such that $G_i$ is continuous on the ball of
%radius $\varepsilon$ around $F$ and $[F \oplus G] \in \prv{n+1,k}$.
\end{lem}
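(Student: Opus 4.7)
The plan is to build $\phi$ by starting from a single ``base'' dilation of $F$ guaranteed by Theorem \ref{HL dilation}, and then smoothly adjust the bottom block of rows by Gram--Schmidt as $G$ moves away from $F$. Concretely, apply Theorem \ref{HL dilation} to obtain some $H=(h_i)_{i=1}^k \in \prv{k,k-n}$ with $F\oplus H \in \prv{k,k}$. Then $[F\oplus H]_{k\times k}$ is an orthogonal matrix; let $r_1,\ldots,r_n$ be its top $n$ rows (the rows of $[F]_{n\times k}$) and let $r_{n+1},\ldots,r_k$ be its bottom $k-n$ rows (the rows of $[H]_{(k-n)\times k}$). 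These $k$ vectors form an orthonormal basis of $\R^k$.

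For any $G=(g_i)_{i=1}^k\in\prv{k,n}$, the rows $r_1(G),\ldots,r_n(G)$ of $[G]_{n\times k}$ are orthonormal in $\R^k$, by the row-characterization of Parseval frames established inside the proof of Theorem \ref{HL dilation}. Consider the ordered list of vectors
\[
r_1(G),\,\ldots,\,r_n(G),\,r_{n+1},\,\ldots,\,r_k \;\in\;\R^k.
\]
At $G=F$ this list is an orthonormal basis, hence the determinant of the $k\times k$ matrix they form is nonzero; by continuity there is an $\varepsilon>0$ such that this determinant stays nonzero, and moreover all the intermediate Gram--Schmidt denominators stay bounded away from $0$, for all $G\in B_\varepsilon(F)\cap\prv{k,n}$.

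On this neighborhood, run Gram--Schmidt on the above list. Because the first $n$ vectors are already orthonormal, Gram--Schmidt leaves them untouched and only modifies the last $k-n$ entries, producing orthonormal $r_{n+1}(G),\ldots,r_k(G)$ that depend smoothly on $G$. Define $\phi(G)\in\oplus_{i=1}^k\R^{k-n}$ so that the rows of $[\phi(G)]_{(k-n)\times k}$ are exactly $r_{n+1}(G),\ldots,r_k(G)$. Then by construction the rows of $[G\oplus\phi(G)]_{k\times k}$ are orthonormal, so this square matrix is orthogonal and its columns form an orthonormal basis of $\R^k$, i.e.\ $G\oplus\phi(G)\in\prv{k,k}$. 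In particular the rows of $[\phi(G)]_{(k-n)\times k}$ are orthonormal, so by the row-characterization $\phi(G)\in\prv{k,k-n}$. Smoothness of $\phi$ follows from smoothness of Gram--Schmidt on its open domain of validity.

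The only real obstacle is verifying that the Gram--Schmidt procedure is well-defined and smooth throughout a genuine neighborhood of $F$; this reduces to checking that the augmented $k\times k$ row-matrix stays invertible, which is an open condition satisfied at $G=F$ by construction, so shrinking $\varepsilon$ handles it uniformly. Everything else is a direct bookkeeping check using the equivalence ``$G$ is a Parseval frame for $\R^n$ iff the rows of $[G]_{n\times k}$ are orthonormal'' that was already extracted from the proof of Theorem \ref{HL dilation}.
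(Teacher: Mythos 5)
Your proposal is correct and follows essentially the same route as the paper: fix one complement $H$ with $F\oplus H\in\prv{k,k}$, use openness of invertibility of $[G\oplus H]_{k\times k}$ to get the neighborhood, and apply Gram--Schmidt to the rows with the (already orthonormal) rows of $[G]_{n\times k}$ first, so that only the bottom $k-n$ rows change and the result depends smoothly on $G$. The only cosmetic difference is that you invoke Theorem \ref{HL dilation} explicitly to produce $H$ and spell out the row-orthonormality bookkeeping, which the paper leaves implicit.
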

\begin{proof}
We choose $H\in\prv{k,k-n}$ such that $F\oplus H\in\prv{k,k}$ and
fix an orthonormal basis for $\R^n$. As mentioned earlier, this is
equivalent to the matrix $[F\oplus H]_{k\times k}$ being unitary.
The set of invertible matrices is open, and thus there exists
$\vp>0$ such that $[G\oplus H]_{k\times k}$ is invertible for all
$G\in B_\vp(F)\cap\prv{k,n}$. For each $G\in B_\vp(F)$, we apply the
Gram-Schmidt procedure to the rows of $[G\oplus H]_{k\times k}$,
where the procedure is applied to the rows of $[G]_{n\times k}$
before the rows of $[H]_{(k-n)\times k}$. As $G\in\prv{k,n}$, the
rows of $[G]_{n\times k}$ are orthonormal. Thus the Gram-Schmidt
procedure when applied to $[G\oplus H]_{k\times k}$ will leave the
rows contained in $[G]_{n\times k}$ fixed, and hence  the matrix
resulting from applying the Gram-Schmidt procedure will be of the
form $[G\oplus\phi(G)]_{k\times k}$ for some
$\phi(G)\in\prv{k,k-n}$. Furthermore, the map
$\phi:\prv{k,n}\rightarrow\prv{k,k-n}$ is smooth as the Gram-Schmidt
procedure is smooth when applied to the rows of any set of
invertible matrices.

\end{proof}

The frame given in Example \ref{Sphere example} shows that Lemma
\ref{LemmaLocal} is not true globally for $k=3$ and $n=1$.  However,
the proof of Theorem \ref{T:n+1} gives that Lemma \ref{LemmaLocal}
is true globally for $k=n+1$.  We are now ready to prove Theorem
\ref{VB}.
%\begin{Theorem}\label{VB}
%Let $\pi_1:E_1\rightarrow M$ be a rank $n$ vector bundle over a
%paracompact manifold $M$ with a moving Parseval frame
%$(f_i)_{i=1}^k$.  There exists a rank $k-n$ vector bundle
%$\pi_2:E_2\rightarrow M$ with a moving Parseval frame
%$(g_i)_{i=1}^k$ so that $(f_i\oplus g_i)_{i=1}^k$ is a moving
%orthonormal basis for the vector bundle $\pi_1\oplus\pi_2:E_1\oplus
%E_2\rightarrow M$.
%\end{Theorem}
\begin{proof}[Proof of Theorem \ref{VB}]

%For each $x\in M$, there exists $\vp_x>0$ and a moving orthonormal
%basis $(e_{x,i})_{i=1}^n$ for the vector bundle
%$\pi_1:\pi_1^{-1}(B_{\vp_x}(x))\rightarrow B_{\vp_x}(x)$.    We have
%a natural smooth map $\phi_x:B_{\vp_x}(x)\rightarrow\prv{k,n}$ given
%by $\phi_x(y)=(f_i(y))_{i=1}^k$ with respect to the basis
%$(e_{x,i}(y))_{i=1}^n$ for all $x\in M$ and $y\in B_{\vp_x}(x)$.
Let $\pi_1:E_1\rightarrow M$ be a rank $n$ vector bundle over a
paracompact manifold $M$ with a moving Parseval frame
$(f_i)_{i=1}^k$. By Lemma \ref{LemmaLocal}, we can locally choose a
complementary moving Parseval frame.  That is, for each $x\in M$,
there exists $\vp_x>0$ and a moving Parseval frame
$(g_{x,i})_{i=1}^k$ for the trivial vector bundle
$\pi_{x}:B_{\vp_x}(x)\times\R^{k-n}\rightarrow B_{\vp_x}(x)$ such
that $(f_i\oplus g_i)_{i=1}^k$ is a moving orthonormal basis for the
vector bundle $\pi_1|_{\pi_1^{-1}(B_{\vp_x}(x))}\oplus\pi_{x}$. The
collection of sets $\{B_{\vp_x}(x)\}_{x\in M}$ is an open cover of
$M$, and thus there is a partition of unity $\{\psi_a\}_{a\in A}$
subordinate to a locally finite open refinement $\{U_a\}_{a\in A}$.
We thus have for each $a\in A$ a moving Parseval frame
$(g_{a,i})_{i=1}^k$ for the trivial vector bundle
$\pi_a:U_a\times\R^{k-n}\rightarrow U_a$ such that $(f_i\oplus
g_{a,i})_{i=1}^k$ is a moving orthonormal basis for
$\pi_1|_{\pi_1^{-1}(U_a)}\oplus \pi_a$.
% for $\pi_1^{-1}(U_a)\oplus
%U_a\times\R^{k-n}$.
 We use the partition of unity to extend $(g_{a,i})_{i=1}^k$ to
all of $M$ by defining $g_i=\bigoplus_{a\in A}\psi_a^{1/2} g_{a,i}$
for all $1\leq i\leq n$, where we set $g_{a,i}(x)=0$ if $x\not\in
U_a$.  Thus $g_i$ is a smooth section of the trivial vector bundle
$\pi_t:M\times\oplus_{a\in A}\R^{k-n}\rightarrow M$ for all $1\leq
i\leq k$. The following simple calculations show that $(f_i(x)\oplus
g_i(x))_{i=1}^k$ is an orthonormal set of vectors in
$\pi_1^{-1}(x)\oplus\bigoplus_{a\in A} \R^{k-n}$ for all $x\in M$.
For all $1\leq i,j\leq k$, we have the following calculation.
\begin{align*}
\langle f_i(x)\oplus g_i(x),f_j(x)\oplus g_j(x)\rangle&=\langle
f_i(x),f_j(x)\rangle+\sum_{a\in A}\psi_a \langle g_{a,i}(x),
g_{a,j}(x)\rangle\\
 &=\sum_{a\in A}\psi_a \big(\langle
f_i(x),f_j(x)\rangle+\langle g_{a,i}(x), g_{a,j}(x)\rangle\big)\\
&=\sum_{a\in A}\psi_a \langle f_i(x)\oplus g_{a,i}(x),f_j(x)\oplus
g_{a,j}(x)\rangle\\
&=\sum_{a\in A}\psi_a \delta_{i,j}=\delta_{i,j}
\end{align*}
Thus, $(f_i\oplus g_i)_{i=1}^k$ is a sequence of smooth orthonormal
sections of
 $\pi_1\oplus \pi_t$, and
hence $E:=span_{1\leq i\leq k,x\in M} f_i(x)\oplus g_i(x)$ is a
smooth manifold and we have an induced vector bundle
$\pi_E:E\rightarrow M$.
 We now show that $(f_j)_{1\leq j\leq k}$ being a moving Parseval
frame implies that $span_{1\leq j\leq k} f_j(x)\oplus0\subset
span_{1\leq j\leq k} f_j(x)\oplus g_j(x)$ for all $x\in M$. Indeed,
if $x\in M$ and $y\in\pi_1^{-1}(x)$ then we calculate the following.
$$
\|P_{\pi_E^{-1}(x)}y\oplus0\|^2=\sum_{i=1}^k<y\oplus0,f_i\oplus
g_i>^2 =\sum_{i=1}^k<y,f_i>^2=\|y\|^2
$$
Which implies that $y\oplus0=P_{\pi_E^{-1}(x)}y\oplus0$.  Thus we
conclude that $span_{1\leq j\leq k} f_j(x)\oplus0\subset span_{1\leq
j\leq k} f_j(x)\oplus g_j(x)$, and hence $p_1:E_1\rightarrow M$ is a
sub-bundle of $\pi_E:E\rightarrow M$. It is then possible to define
$\pi_2:E_2\rightarrow M$ as the orthogonal bundle of
$\pi_1:E_1\rightarrow M$ in $\pi_E:E\rightarrow M$. We have that
$f_i=P_{E_1}f_i\oplus g_i$ and thus by definition $g_i=P_{E_2}
f_i\oplus g_i$.  As the orthogonal projection of a moving
orthonormal basis onto a sub-bundle, $(g_i)_{i=1}^\infty$ is a
moving Parseval frame for $E_2$ and is a complementary frame for
$(f_i)_{i=1}^\infty$.
\end{proof}

The above proof takes an approach using local coordinates, and then
combines the pieces using a partition of unity.  As this is a common
technique in differential topology, the above construction is
valuable in that it could potentially be combined with other proofs
and constructions.  We present as well a second proof which avoids
local coordinates and is based on the original proof of the dilation
theorem of Han and Larson.

\begin{proof}[Second proof of Theorem \ref{VB}]
Let $\pi_1:E_1\rightarrow M$ be a rank $n$ vector bundle over a
paracompact manifold $M$ with a moving Parseval frame
$(f_i)_{i=1}^k$.  Let $\pi:M\times\R^k\rightarrow M$ denote the
trivial rank $k$ vector bundle over $M$, and let $(e_i)_{i=1}^k$ be
the moving unit vector basis for $\pi:M\times\R^k\rightarrow M$. We
define a bundle map $\theta: E_1\rightarrow M\times\R^k$ over $M$ by
$\theta(y)=\sum_{i=1}^k \langle y , f_i(\pi_1(y))\rangle
e_i(\pi_1(y))$.  As $(f_i(\pi_1(y))_{i=1}^k$ is a Parseval frame for
the fiber containing $y$, we have that $\|y\|^2=\sum_{i=1}^k \langle
y , f_i(\pi_1(y))\rangle^2=\|\theta(y)\|^2$.  Hence,
$\theta|_{\pi_1^{-1}(x)}$ is a linear isometric embedding of the
fiber for $\pi_1^{-1}(x)$ into the fiber $\pi^{-1}(x)$ all $x\in M$.
Thus, for convenience, we may identify the bundle
$\pi_1:E_1\rightarrow M$ with
$\pi|_{\theta(E_1)}:\theta(E_1)\rightarrow M$. In particular, we
have that
\begin{equation}\label{eq1}
\langle y,e_i(\pi(y))\rangle=\langle y,f_i(\pi(y))\rangle\textrm{
for all }y\in E_1.
\end{equation}
 Let
$P_1:M\times\R^k\rightarrow E_1$ be orthogonal projection, that is,
$P_1$ is the bundle map such that $P_1(y)$ is the orthogonal
projection of $y$ onto the fiber $\pi_1^{-1}(\pi(y))$. We now show
that $P_1\circ e_i=f_i$ for all $1\leq i\leq k$.  We let $x\in M$,
$y\in\pi_1^{-1}(x)$ and $1\leq i\leq k$, and obtain
\begin{align*}
\langle y, P_1(e_i(x))\rangle&= \langle y,
e_i(x)\rangle\qquad\textrm{ as }y\in E_1\\
&=\langle y, f_i(x)\rangle\qquad\textrm{ by the definition of }\theta\textrm{ as }x=\pi_1(y)\\
\end{align*}
Thus $\langle y, P_1(e_i(x))\rangle=\langle y, f_i(x)\rangle$ for
all $y\in\pi_1^{-1}(x)$, and hence $P_1(e_i(x))= f_i(x)$ for all
$x\in M$.
\end{proof}

%\begin{Thm}\label{T:main}
%Let $M^n$ be an $n$-dimensional smooth manifold and $(f_i)_{i=1}^k$
%be a moving Parseval frame for $TM$ for some $k\geq n$.  There
%exists a smooth $k$-dimensional manifold $N^k$ which has a moving
%orthonormal basis $(e_i)_{i=1}^k$ such that $N^k$ contains $M^n$ as
%a submanifold and $P_{T_x M}e_i(x)=f_i(x)$ for all $x\in M^n$ and
%$1\leq i\leq k$.
%\end{Thm}

We now consider the case where $M$ is a smooth paracompact manifold
with a moving Parseval frame for its tangent bundle $TM$. We may
apply Theorem \ref{VB} to obtain a vector bundle containing $TM$
with a moving orthonormal basis which projects to the moving
Parseval frame. However, we want the moving orthonormal basis to be
for a larger tangent bundle and not just a general vector bundle. To
do this, we will show that actually the total space of the vector
bundle given by Theorem \ref{VB} will be a Riemannian manifold with
an orthonormal basis which projects to the given Parseval frame for
$TM$.

\begin{proof}[Proof of Theorem \ref{T:main}]
We denote $\pi_1:TM\rightarrow M$ to be the tangent bundle. By
Theorem \ref{VB} there exists a rank $(k-n)$ vector bundle
$\pi_2:N\rightarrow M$ with a moving Parseval frame $(g_i)_{i=1}^k$
so that $(f_i\oplus g_i)_{i=1}^k$ is a moving orthonormal basis for
the vector bundle $\pi_1\oplus\pi_2$.  The manifold $N$ has
dimension $k$, as $M$ has dimension $n$ and the vector bundle
$\pi_2:N\rightarrow M$ has rank $k-n$.  For $p\in N$ and
$\gamma:\R\rightarrow N$ such that $\gamma(0)=p$, we have the
differential $D_\gamma\in TN$ defined by
$$D_\gamma(f)=\frac{d}{dt}f(\gamma(t))|_{t=0},
$$
for each smooth real valued $f$ defined on an open neighborhood of
$p$.  We define a smooth map $\Theta: N\times N\rightarrow N$ by
$$\Theta(p,q)=\sum_{i=1}^k\langle g_i(\pi_2(p)),p\rangle
g_i(\pi_2(q))\qquad\textrm{ for all }p,q\in N.$$ The map $\Theta$
has been constructed so that if $p$ and $q$ are contained in the
same fiber of $\pi_2:N\rightarrow M$, i.e. $\pi_2(q)=\pi_2(p)$, then
$\Theta(p,q)=p$.  Note that if $q_0,q_1\in N$ such that
$\pi_2(q_0)=\pi_2(q_1)$, then $\Theta(p,q_0)=\Theta(p,q_1)$. As
$\pi_2(\Theta(p,q))=\pi_2(q)$, we thus have that
$\Theta(p,q)=\Theta(p,\Theta(p,q))$ for all $p,q\in N$.
 We use $\Theta$ to define a smooth map $\psi:TN\rightarrow TN$ by setting
 for each smooth $\gamma:\R\rightarrow
N$, $\psi(D_\gamma)=D_{\gamma}-D_{\Theta(\gamma(0),\gamma)}$.  In
other words, if $\gamma:\R\rightarrow N$ is smooth and $f$ is a
smooth real valued function defined on an open neighborhood of
$\gamma(0)$, then
$$\psi(D_\gamma)(f)=\frac{d}{dt}f(\gamma(t))|_{t=0}-\frac{d}{dt}f\left(\sum_{i=1}^k\left\langle
g_i(\pi_2(\gamma(0))),\gamma(0)\right\rangle
g_i(\pi_2(\gamma(t)))\right)|_{t=0}.
$$
Note that if $D_{\gamma_0}=D_{\gamma_1}$ then
$\psi(D_{\gamma_0})=\psi(D_{\gamma_1})$, and that $\psi(a
D_{\gamma_0}+ D_{\gamma_1})=a \psi(D_{\gamma_0})+\psi(D_{\gamma_1})$
for all $a\in \R$ and smooth $\gamma_0,\gamma_1:\R\rightarrow N$.
Furthermore, if $\pi_2(\gamma)$ is constant, then
$\psi(D_\gamma)=D_\gamma$.
%Let $\gamma:\R\rightarrow N$ be smooth.  We now consider the
%composition $\psi(\psi(D_\gamma))$.
%\begin{align*}
%\psi(\psi(D_\gamma))=&\psi(D_{\gamma}-D_{\Theta(\gamma(0),\gamma)})\\
%=&D_{\gamma}-D_{\Theta(\gamma(0),\gamma)}-(D_{\Theta(\gamma(0),\gamma)}-D_{\Theta(\gamma(0),\Theta(\gamma(0),\gamma))})\\
%=&D_{\gamma}-D_{\Theta(\gamma(0),\gamma)}\qquad\textrm{as }
%\Theta(\gamma(0),\gamma)=\Theta(\gamma(0),\Theta(\gamma(0),\gamma))\\
%=&\psi(D_\gamma)
%\end{align*}
%Thus $\psi$ is a projection, in that
%$\psi(\psi(D_\gamma))=\psi(D_\gamma)$ for all smooth
%$\gamma:\R\rightarrow N$.

For each $p\in N$, we define a linear operator $\Phi:T_pN\rightarrow
T_{\pi_2(p)}M$ by $\Phi(D_\gamma)=D_{\pi_2\circ\gamma}$. Then
$\Phi(\psi(\gamma))=0$ for all smooth $\gamma:\R\rightarrow N$, as
$$\Phi(\psi(\gamma))=D_{\pi_2\circ\gamma}-D_{\pi_2(\Theta(\gamma(0),\gamma))}=D_{\pi_2\circ\gamma}-D_{\pi_2\circ\gamma}=0.$$
  For each $q\in \pi^{-1}_2(\pi_2(p))$ we
define $D_{q}$ as the derivative at $p$ in the direction of $q$.
That is, $D_{q}(f)=\frac{d}{dt}f(p+tq)|_{t=0}$, for each smooth real
valued $f$ defined on an open neighborhood of $p$.  We have that
$\psi(D_q)=D_q$ for all $q\in\pi^{-1}_2(\pi_2(p))$ as $\pi_2(p+tq)$
is constant with respect to $t$. Thus
$\{D_q\}_{\pi_2(q)=\pi_2(p)}=\psi(T_pN)=\Phi^{-1}(0)$ as
$\{D_q\}_{\pi_2(q)=\pi_2(p)}\subseteq\psi(T_pN)\subseteq\Phi^{-1}(0)$
and both spaces $\{D_q\}_{\pi_2(q)=\pi_2(p)}$ and $\Phi^{-1}(0)$ are
$(k-n)$-dimensional.  For each smooth real valued $f$ defined on an
open neighborhood of $p$, we denote $q_\gamma$ to be the unique
vector in $\pi_2^{-1}(\pi_2(p))$ such that
$D_{q_\gamma}=\psi(D_\gamma)$.

% We now define a bundle map $\psi:TN\rightarrow TM$ by $\psi
%(D_\gamma)=D_{\pi_2\circ\gamma}$.  As $M$ is $n$-dimensional, we get
%for each $p\in N$ that $\{D_{\gamma^\perp}\}=\psi^{-1}(T_{\pi})$

We define a smooth bundle map $\phi: TN\rightarrow
E(\pi_1\oplus\pi_2)$ by for $\gamma:\R\rightarrow N$, we set
$\phi(D_\gamma)=D_{\pi_2\circ\gamma}\oplus q_\gamma$.  Note that for
each $p\in N$, $\phi|_{T_pN}:T_pN\rightarrow T_{\pi_2(p)}M$ is an
isomorphism as $D_{\pi_2\circ\gamma}=0$ if and only if
$D_\gamma=D_{q_\gamma}$. Thus $\phi$ induces a Riemannian metric
$\langle\cdot,\cdot\rangle_N$ on $TN$ by $\langle
f,g\rangle_N=\langle \phi(f),\phi(g)\rangle$.  We have that
$(f_i\oplus g_i)_{i=1}^k$ is a moving orthonormal basis for
$\pi_1\oplus \pi_2$, and hence $(\phi^{-1}(f_i\oplus g_i))_{i=1}^k$
is a moving orthonormal basis for $TN$. If $p\in M$ and
$\gamma:\R\rightarrow M\subset N$ such that $\gamma(0)=p$ then
$\phi(D_\gamma)=D_\gamma\oplus0$.  Thus $\phi^{-1}(f_i(p)\oplus
0)=f_i(p)$ for all $1\leq i\leq k$ and $p\in M$, and hence
$f_i=P_{T_pM}\phi^{-1}(f_i\oplus g_i)$ for all $1\leq i\leq k$.
\end{proof}

We may apply Theorem \ref{T:main} to obtain the following corollary,
where we call a smooth manifold parallelizable if it has a moving
basis for its tangent bundle.

\begin{cor}
Let $M$ be a paracompact smooth manifold.  If $M$ immerses in a $k$
dimensional parallelizable paracompact smooth manifold, then $M$
embeds in a $k$ dimensional parallelizable paracompact smooth
manifold.
\end{cor}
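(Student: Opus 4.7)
The plan is to bootstrap the immersion into a moving Parseval frame on $TM$ and then feed that frame into Theorem \ref{T:main}. Let $\phi:M\rightarrow N_0$ be the given immersion into a parallelizable paracompact $k$-manifold $N_0$, and choose a moving basis $(v_i)_{i=1}^k$ for $TN_0$. Declare $(v_i)$ orthonormal, equipping $N_0$ with a Riemannian metric for which $(v_i)$ is a moving orthonormal basis. Give $M$ the pulled-back Riemannian metric $\langle u,w\rangle_M := \langle d\phi(u),d\phi(w)\rangle_{N_0}$, which is positive definite because $\phi$ is an immersion. Then at each $x\in M$ the differential $d\phi_x$ isometrically embeds $T_xM$ into $T_{\phi(x)}N_0$, so orthogonally projecting $v_i(\phi(x))$ onto $d\phi_x(T_xM)$ and pulling back by $d\phi_x^{-1}$ produces vectors $f_i(x)\in T_xM$. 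As discussed in Section \ref{S:2}, this construction (locally an embedding, so Corollary \ref{projection cor} applies) yields a moving Parseval frame $(f_i)_{i=1}^k$ for $TM$ consisting of $k$ sections.

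Now apply Theorem \ref{T:main} directly to the Riemannian manifold $M$ together with the moving Parseval frame $(f_i)_{i=1}^k$. This produces a $k$-dimensional Riemannian manifold $N$ containing $M$ as a submanifold and carrying a moving orthonormal basis $(e_i)_{i=1}^k$ for $TN$. Since a moving orthonormal basis is in particular a moving basis, $N$ is parallelizable. The construction in the proof of Theorem \ref{T:main} realizes $N$ as the total space of a rank $(k-n)$ vector bundle $\pi_2:N\rightarrow M$ over the paracompact base $M$, and the total space of a finite-rank smooth vector bundle over a paracompact base is paracompact; so $N$ is paracompact as well. Hence $M\hookrightarrow N$ is the desired embedding of $M$ into a $k$-dimensional parallelizable paracompact smooth manifold.

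I do not expect a genuine obstacle here: Theorem \ref{T:main} was tailored precisely to convert the existence of a moving Parseval frame on $TM$ into an embedding into a parallelizable manifold of the same dimension. The only pieces of bookkeeping worth checking are that an immersion (rather than an embedding) still gives a well-defined pulled-back metric and a well-defined Parseval frame on $TM$ (both are pointwise constructions at $x\in M$, using only that $d\phi_x$ is injective), and that paracompactness transfers from $M$ to the total space $N$. Both are routine.
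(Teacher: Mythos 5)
Your proposal is correct and follows essentially the same route as the paper: equip the target with a metric making the moving basis orthonormal, project and pull back through the immersion to get a moving Parseval frame of $k$ sections for $TM$, and then invoke Theorem \ref{T:main}. The extra bookkeeping you supply (positive definiteness of the pulled-back metric, paracompactness of the total space $N$) is correct and in fact slightly more careful than the paper's own two-line argument.
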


\begin{proof}
Assume that $M$ immerses in a parallelizable smooth manifold $N$. We
may assign a Riemannian metric to $N$ such that the moving basis for
$TN$ is an orthonormal basis.  Projecting this orthonormal basis
then pulling back to $TM$ gives a moving Parseval frame for $TM$ of
$k$ vectors. Thus $M$ embeds in a $k$ dimensional parallelizable
smooth manifold by Theorem \ref{T:main}.
\end{proof}

%We will create two subbundles of the tangent bundle
%$\pi:TN\rightarrow N$. The first will be a rank $n$ subbundle
%$\eta_1:E_1\rightarrow N$ and the second will be a rank $k-n$
%subbundle $\eta^N_2:E_2\rightarrow N$. Let $p\in N$.  We now
%construct $\eta^{-1}_1(p)\subset T_p N$ . For each

%For
%each point $x\in N$ we define
% $f'_i(x)=f_i\circ\pi_2(x)$ and $g'_i(x)=g_i\circ\pi_2(x)$ for all $1\leq i\leq k$.
%For each $1\leq i\leq k$, we define a vector field $e_i:N\rightarrow
%TN$ by $e_i=f_i\circ\pi_2+g_i\circ\pi_2$. First we check to make
%sure that $e_i$ is indeed a vector field.  Let $p\in N$, and let
%$x=\pi_2(p)\in M$.  We define $\gamma:\R\rightarrow N$ by
%$\gamma(t)=t(f_i(x)+g_i(x))$. Let $U\subset N$ be an open
%neighborhood of $p$, and let $f:U\rightarrow \R$ be smooth.  We have
%that,
% $$ D_\gamma(f)=\frac{d}{dt}f(\gamma(t))|_{t=0}=\frac{d}{dt}f(t(f_i(x)+g_i(x)))|_{t=0}$$

% a smooth curve such that $\gamma(0)=p$.
%There exists $x\in M$ and $f\in \pi_1^{-1}(x)$ such that $p=f$.

%well defined and is a smooth vector field in $TN$. First, $f_i$ is a
%vector field in $TM$, thus $f_i\circ\pi_2$ is vector field in $TN$
%as $\pi_2:N\rightarrow M$ is a vector bundle over $M$. Second, $g_i$
%is a vector field in $\pi_2:N\rightarrow M$, thus $g_i\circ\pi_2$ is
%vector field in $TN$.
%\end{proof}

\section{Open problems}\label{S:4}

The manifold $N$ constructed in Theorem \ref{T:main} is the total
space of a vector bundle, and is hence not compact.  We thus have
the following question:
\begin{Question}
Let $M^n$ be a smooth $n$-dimensional compact Riemannian manifold
and $(f_i)_{i=1}^k$ be a moving Parseval frame for $TM$ for some
$k\geq n$.  Does there exists a smooth $k$-dimensional compact
Riemannian manifold $N^k$ which has a moving orthonormal basis
$(e_i)_{i=1}^k$ such that $N^k$ contains $M^n$ as a submanifold and
$P_{T_x M}e_i(x)=f_i(x)$ for all $x\in M^n$ and $1\leq i\leq k$?
\end{Question}

If, $(f_i)_{i=1}^k$ is a Parseval frame for $\R^n$ and $k> m> n$,
then there exists a Parseval frame $(h_i)_{i=1}^k$ for
$\R^n\oplus\R^{m-n}$ such that $P_{\R^n}(h_i)=f_i$ for all $1\leq
i\leq m$.  Thus instead of dilating all the way to an orthonormal
basis for a $k$-dimensional Hilbert space, it is possible to dilate
to a Parseval frame for a $m$-dimensional Hilbert space.  This
motivates the following question in the vector bundle setting.

\begin{Question}
Let $k>m>n$ be integers, and let $\pi_1:E_1\rightarrow M$ be a rank
$n$ vector bundle over a paracompact manifold $M$ with a moving
Parseval frame $(f_i)_{i=1}^k$.  Does there exists a rank $m-n$
vector bundle $\pi_2:E_2\rightarrow M$ with a moving Parseval frame
$(g_i)_{i=1}^k$ so that $(f_i\oplus g_i)_{i=1}^k$ is a moving
Parseval frame for the vector bundle $\pi_1\oplus\pi_2:E_1\oplus
E_2\rightarrow M$?
\end{Question}

In \cite{BF}, it is proven that for every natural  numbers $k\geq
n$, there exists a tight frame $(f_i)_{i=1}^k$ of $\R^n$ such that
$\|f_i\|=1$ for all $1\leq i\leq k$, which they call a finite unit
tight frame or FUNTF. An explicit construction for FUNTFs is given
in \cite{DFKLOW}, and they are further studied in
\cite{BC},\cite{CFM}. FUNTFs are also of interest for applications
in signal processing, as they minimize mean squared error under an
additive noise model for quantization \cite{GK}. For a vector bundle
to have a moving FUNTF, it is necessary that it have a nowhere zero
section. Thus we have the following question concerning moving
FUNTFs.

\begin{Question}
Let $\pi:E\rightarrow N$ be a rank $n$ vector bundle over a
paracompact manifold $N$ such that $\pi$ has a nowhere zero section.
For what $k\geq n$ does $\pi$ have a moving tight frame
$(f_i)_{i=1}^k$ such that $\|f_i(x)\| = 1$ for all $x\in N$ and
$1\leq i \leq k$?
\end{Question}

These questions are general and potentially difficult, and so
solutions for certain cases would still be valuable.  For instance,
for general values of $k$ and $n$, it is unknown if the collection
of FUNTFs are connected \cite{DySt}.  A moving FUNTF of $k$ sections
for a rank $n$ vector bundle over the circle can be thought of as a
path in the collection of FUNTFs of $k$ vectors for $\R^n$.  Thus,
knowing whether or not a rank $n$ vector bundle over the circle has
a FUNTF of $k$ vectors will give insight into the problem of
determining the connected components of the collection of FUNTFs of
$k$ vectors for $\R^n$.

\end{document}